\newtheorem{theorem}{Theorem}[section]
\newtheorem{lemma}[theorem]{Lemma}
\newenvironment{definition}[1][Definition]{\begin{trivlist}
\item[\hskip \labelsep {\bfseries #1}]}{\end{trivlist}}
\newenvironment{remark}[1][Remark]{\begin{trivlist}
\item[\hskip \labelsep {\bfseries #1}]}{\end{trivlist}}
\newcommand{\hg}{\ensuremath{\!\:_0F_1}}
\newcommand{\hgq}{\ensuremath{\!\:_0\Phi_1}}
\newcommand\cminus{\mathbin{\raisebox{-\height}{$-$}}}
\newcommand\cplus{\mathbin{\raisebox{-\height}{$+$}}}
\tikzset{snake it/.style={decorate, decoration=snake}}
\DeclareMathOperator{\maj}{maj}
\DeclareMathOperator{\tmaj}{tmaj}
\DeclareMathOperator{\SYT}{SYT}
\title{Hook-length formula and applications to alternating permutations}
\author{Lucas Randazzo\thanks{\href{mailto:lucas.randazzo@u-pem.fr}{lucas.randazzo@u-pem.fr}.}}
\begin{document}

\maketitle

\begin{abstract}
In this paper, we take interest in finding applications for a hook-length formula recently proved in (Morales Pak Panova 2016). This formula can be applied to give a non trivial relation between alternating permutations and weighted Dyck paths. First, we give an alternative proof for this result using continued fractions, and then we apply a similar reasoning to the more general case of $k$-alternating permutations.
\end{abstract}

\section{Introduction}
The \emph{hook-length formula} for the number of standard Young tableaux of a Young diagram is a staple result in enumerative combinatorics, and as such has been widely studied. This formula has been discovered by Frame, Robinson and Thrall \cite{frame1954hook} in 1954, but only recently has a similar formula been proven in the more general case of skew tableaux by Naruse \cite{naruse2014schubert} in 2014. Two years later, Morales, Pak and Panova \cite{morales2016hook} proved a $q$-generalization of Naruse formula. They also applied their formula to a certain family of skew shapes, resulting in a non trivial relation between alternating permutations and weighted Dyck paths. The main goal of this paper is to explore different implications of this relation, while giving an alternative proof not involving the hook-length formula. Even though most continued fractions for generating functions are for natural series, as seen in Flajolet's theory \cite{flajolet1980combinatorial}, our proof instead relies on a continued fraction of the exponential series for Euler numbers.

In the next section, we introduce the main definitions and results that will be used in the following sections. We review the definitions of alternating permutations and Euler numbers. We then present Dyck paths and hypergeometric functions, then discuss their role in Flajolet's theory on continued fractions. Finally, we discuss about the hook-length formula and its applications. In the third section, we give a proof of Theorem \ref{th:toprove} without using the hook-length formula, then we prove a similar formula for alternating permutations of even length. In the last section, we discuss a generalization to $k$-alternating permutations.
\section{Definitions and preliminary results}
\subsection{Alternating permutations and Euler numbers}
In this article we focus our interest on alternating permutations, which are permutations that are defined from a condition involving their descent set. 
\begin{definition}
Let $\sigma \in \mathfrak{S}_n$, and $i \in \{1,\ldots,n-1\}$. We say that $i$ is a \emph{descent} of $\sigma$ if $\sigma_i > \sigma_{i+1}$. The \emph{descent set} of $\sigma$ is written $D(\sigma)$, and is the set if all descents of $\sigma$.
\end{definition}
\begin{definition}
Let $\sigma \in \mathfrak{S}_n$. We say that $\sigma$ is \emph{alternating} if $D(\sigma) = \{1,3,\ldots,2\left\lfloor{n}/{2}\right\rfloor-1\}$, \emph{i.e.} $\sigma_1~>~\sigma_2~<~\sigma_3~>~\cdots$. We denote by $\textit{Alt}_n$ the set of alternating permutations of $\mathfrak{S}_n$. Similarly, $\sigma$ is \emph{reverse alternating} if $D(\sigma) = \{2,4,\ldots,2\left\lceil{n}/{2}\right\rceil - 2\}$, \emph{i.e.} $\sigma_1~<~\sigma_2~>~\sigma_3~<~\cdots$. We denote by $\textit{RevAlt}_n$ the set of reverse alternating permutations of $\mathfrak{S}_n$.
\end{definition}
Alternating permutations in $\mathfrak{S}_n$ are enumerated by the \emph{Euler number} $E_n$. In his survey \cite{stanley2010survey}, Stanley gives some enumerative properties of alternating permutations. In particular, 
\begin{equation}
\label{eq:tansec}
\tan(z)=\sum_{n\geq 0}\frac{E_{2n+1}}{(2n+1)!}z^{2n+1},\hspace{1cm}\sec(z)=\sum_{n\geq 0}\frac{E_{2n}}{(2n)!}z^{2n}.
\end{equation}
Hence, we may also refer to $\{E_{2n+1}\}$ as the \emph{tangent numbers}, and $\{E_{2n}\}$ as the \emph{secant numbers}. Stanley also defines a natural $q$-analogue for the Euler numbers as follows
\[
E_n(q) := \sum_{\sigma \in \textit{RevAlt}_n}q^{\maj(\sigma^{-1})},\hspace{1cm}E_n^*(q) := \sum_{\sigma \in \textit{Alt}_n}q^{\maj(\sigma^{-1})},
\]
where $\maj(\sigma) := \sum_{i \in D(\sigma)}i$ is the \emph{major index} of $\sigma$. We notice that $E_n(1) = E_n^*(1) = E_n$, and more generally we have the relation

\begin{equation}
\label{eq:imp}
E^*_n(q)  =  q^{\binom{n}{2}}E_n(1/q).
\end{equation}

These refinements extend to $q$-analogues for the trigonometric functions $\tan$ and $\sec$. For $a \in \mathbb{N}$, let $(a)_n = \prod_{i=0}^{n-1}(a+i)$ be the Pochhammer symbol, and $(a;q)_n = \prod_{i=0}^{n-1}(1-aq^i)$ be its $q$-analogue. If
\[
\arraycolsep=1.4pt\def\arraystretch{2.2}
\begin{array}{rclcrcl}
\sin_q(z)&:=&\sum_{n \geq 0}(-1)^n\frac{u^{2n+1}}{(q;q)_{2n+1}},& &\cos_q(z)&:=&\sum_{n \geq 0}(-1)^n\frac{u^{2n}}{(q;q)_{2n}}, \\
\sin^*_q(z)&:=&\sum_{n \geq 0}(-1)^nq^{\binom{2n+1}{2}}\frac{u^{2n+1}}{(q;q)_{2n+1}},& &
\cos^*_q(z)&:=&\sum_{n \geq 0}(-1)^nq^{\binom{2n}{2}}\frac{u^{2n}}{(q;q)_{2n}}, \\
\end{array}
\]
then, from Theorem 2.1 in \cite{stanley2010survey}, we have
\[
\arraycolsep=1.4pt\def\arraystretch{2.2}
\begin{array}{rcccc}
\sum_{n \geq 0}\frac{u^{n}}{(q;q)_{n}}E_{n}(q)&=&\frac{\sin_q(z)}{\cos_q(z)} + \frac{1}{\cos_q(z)}&:=&\tan_q(z)+\sec_q(z),\\
\sum_{n \geq 0}\frac{u^{n}}{(q;q)_{n}}E^*_{n}(q)&=&\frac{\sin^*_q(z)}{\cos^*_q(z)} + \frac{1}{\cos^*_q(z)}&:=&\tan^*_q(z)+\sec^*_q(z).\\
\end{array}
\]
Also note the following identities, linking both variants of $\cos$ and $\sin$:
\[
\arraycolsep=1.4pt\def\arraystretch{2.2}
\begin{array}{rclcrcl}
\cos^*_q(x) & = & \cos_{1/q}(-x/q),& &
\sin^*_q(x) & = & \sin_{1/q}(-x/q).
\end{array}
\]
\begin{remark}
With (\ref{eq:imp}), it follows that $\tan_q^* = \tan_q$. This can also be proven using a simple bijection between $\textit{RevAlt}_n$ and $\textit{Alt}_n$ preserving $\maj(\sigma^{-1})$ when $n$ is odd: take $\sigma \in \textit{Alt}_n$, define $\sigma'$ as $\sigma'(i) = n-\sigma(n-i)$. Then $\sigma' \in \textit{RevAlt}_n$, and $\maj(\sigma^{-1}) = \maj(\sigma'^{-1})$.
\end{remark}
\subsection{Dyck paths}
Later in this article, we show the relation between alternating permutations and weighted Dyck paths. But beforehand it is important to understand how the generating function of Dyck paths can be written as a continued fraction.
\begin{definition}
A \emph{Dyck Path of length $2n$} is a path in the upper-right quarter plane whose steps are in $\{(1,1),(1,-1)\}$, begins at $(0,0)$ and ends at $(2n,0)$. Let $\mathcal{D}_n$ be the set of such paths.
\end{definition}
For convenience, we represent Dyck paths as the succession of heights it reaches rather than the steps it takes: let $(h_0,h_1,\ldots,h_{2n})\in\mathcal{D}_n$, so that $h_0 = h_{2n} = 0$, and for all $i$, $h_i \geq 0$ and $|h_i-h_{i+1}| = 1$. Thus, $h_i$ is the height the path reaches after $i$ steps. In the rest of this paper, we use the space-saving notation for continued fractions:
\[
\frac{a_0}{b_0} \cminus \frac{a_1}{b_1} \cminus \frac{a_2}{b_2} \cminus \cdots = 
\cfrac{a_0}{b_0-\cfrac{a_1}{b_1-\cfrac{a_2}{b_2 - \ddots}}}.
\]
The following lemma is a corollary of Flajolet's results on continued fractions in \cite{flajolet1980combinatorial}.
\begin{lemma}
\label{lem:fc}
Let $\mathbb{K}$ be a commutative field, and let $\omega : \mathbb{N} \rightarrow \mathbb{K}\setminus \{0\}$ be a weight function over the heights of a Dyck path. Then the formal series of weighted Dyck paths is a Stieltjes continued fraction:
\[
\arraycolsep=1.4pt\def\arraystretch{2.2}
\begin{array}{rcl}
\sum_{n\geq 0}\sum_{(h_0,h_1,\ldots,h_{2n}) \in \mathcal{D}_n}\prod_{i=0}^{2n} \omega(h_i)z^{2n} &=& \frac{\omega(0)}{1} \cminus \frac{\omega(0)\omega(1)z^2}{1} \cminus\frac{\omega(1)\omega(2)z^2}{1} \cminus \cdots\\
&=& \frac{1}{\omega(0)^{-1}} \cminus \frac{z^2}{\omega(1)^{-1}} \cminus\frac{z^2}{\omega(2)^{-1}} \cminus \cdots.\\
\end{array}
\]
\end{lemma}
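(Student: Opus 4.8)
The plan is to reduce the statement to Flajolet's fundamental correspondence between weighted lattice paths and continued fractions \cite{flajolet1980combinatorial}, the only genuine work being to translate the \emph{vertex}-weighting $\prod_{i=0}^{2n}\omega(h_i)$ into an equivalent \emph{step}-weighting to which that theorem applies directly. The device I would use is to charge each step with the weight of the vertex it arrives at. Over a whole path $(h_0,\dots,h_{2n})$ this product telescopes to $\prod_{i=1}^{2n}\omega(h_i)$, that is, every vertex weight except that of the initial vertex $h_0=0$. Consequently the vertex-weighted generating function equals $\omega(0)$ times the step-weighted one, which is what produces the leading factor $\omega(0)$ on the right-hand side, and Flajolet's $S$-fraction with up-step weight $\omega(h+1)$ and down-step weight $\omega(h-1)$ gives the partial numerators $\omega(k)\omega(k+1)z^2$.

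Alternatively -- and this is the route I would actually write out, since it keeps the lemma self-contained and makes the bookkeeping transparent -- I would argue by a first-return decomposition. For each height $k$ let $F_k(z)$ be the generating function of paths that start and end at height $k$, never drop below $k$, weighted by $z$ per step together with $\omega$ at every visited vertex except the first. Splitting such a path at its first return to level $k$ gives the functional equation
\[
F_k = 1 + \omega(k)\,\omega(k+1)\,z^2\,F_{k+1}\,F_k ,
\]
where the factor $\omega(k)\omega(k+1)z^2$ records the opening up-step to $k+1$, the two boundary steps and their arrival vertices, and the elevated part contributes $F_{k+1}$. Solving yields $F_k=1/\bigl(1-\omega(k)\omega(k+1)z^2F_{k+1}\bigr)$, and unfolding this from $k=0$ produces
\[
F_0=\frac{1}{1}\cminus\frac{\omega(0)\omega(1)z^2}{1}\cminus\frac{\omega(1)\omega(2)z^2}{1}\cminus\cdots .
\]
Multiplying by $\omega(0)$ to restore the weight of the omitted initial vertex $h_0=0$ gives the first displayed equality of the lemma.

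The second equality is then purely formal: it is the standard \emph{equivalence transformation} of continued fractions, in which one rescales the $k$-th level by a nonzero factor $r_k$, replacing each partial numerator $a_k$ by $r_{k-1}r_k\,a_k$ (with $r_{-1}=1$ for the leading term) and each partial denominator $b_k$ by $r_k\,b_k$. Choosing $r_k=\omega(k)^{-1}$ turns each numerator $\omega(k-1)\omega(k)z^2$ into $z^2$ and each denominator $1$ into $\omega(k)^{-1}$, while the leading $\omega(0)/1$ becomes $1/\omega(0)^{-1}$, which is exactly the claimed form. I expect the only delicate point to be the weight bookkeeping in the first step: correctly attributing one $\omega$-factor to each vertex so that no vertex is double-counted or omitted at the concatenation points, and tracking the single leftover factor $\omega(0)$. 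Once that is pinned down, both equalities follow from Flajolet's theorem (or the recursion above) together with a routine continued-fraction identity.
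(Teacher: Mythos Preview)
Your proposal is correct and aligns with the paper's approach: the paper does not actually prove this lemma but states it as a corollary of Flajolet's results \cite{flajolet1980combinatorial}, and for the second equality gives only the one-line remark that it is obtained by multiplying by $\omega(0)^{-1}$ on both sides of the first fraction, then by $\omega(1)^{-1}$ on the second, and so on --- precisely your equivalence transformation with $r_k=\omega(k)^{-1}$. Your first-return decomposition and the vertex-to-step weight translation supply detail the paper omits, but the underlying argument is the same.
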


\begin{remark}
The second equality is obtained by multiplying by $\omega(0)^{-1}$ on both sides of the first fraction, then by $\omega(1)^{-1}$ on the second, and so on.
\end{remark}
\begin{remark}
We mostly apply this lemma with $\mathbb{K} = \mathbb{R}$ or $\mathbb{K} = \mathbb{R}(X)$.
\end{remark}

\subsection{Hypergeometric functions}
It is useful to see continued fractions as quotient of \emph{hypergeometric functions}, since they satisfy the conditions of the following lemma.

\begin{lemma}
\label{lem:makefc}
For $i \geq 0$, let $f_i(z) = \sum_{n \geq 0} a_{i,n}z^n$ be a family of formal series. If for all $i$, there exists $k_i$ such that $ f_{i-1}~-~f_i~=~k_i~z~f_{i+1} $,
then
\[
\cfrac{f_1}{f_0} = \frac{1}{1} \cplus \frac{k_1z}{1} \cplus \frac{k_2z}{1} \cplus \cdots.
\]
\end{lemma}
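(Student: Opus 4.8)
The plan is to pass to the sequence of quotients $g_i := f_i/f_{i-1}$ and show that the hypothesis is nothing more than the defining recursion of the advertised continued fraction. First I would record a normalization remark that makes the quotients meaningful. Comparing the coefficient of $z^0$ on the two sides of $f_{i-1} - f_i = k_i z f_{i+1}$, the right-hand side has no constant term, so $a_{i-1,0} = a_{i,0}$ for every $i$; hence all the $f_i$ share one common constant term, which we take to be nonzero (and may normalize to $1$). This guarantees that each $f_i$ is invertible in the ring of formal power series, that the quotients $g_i$ are well-defined, and that $g_i(0) = 1$, consistently with the leading $\tfrac{1}{1}$ of the continued fraction.

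Next I would divide the hypothesis through by $f_i$, giving $\frac{f_{i-1}}{f_i} - 1 = k_i z \frac{f_{i+1}}{f_i}$, that is $\frac{1}{g_i} = 1 + k_i z\, g_{i+1}$, or equivalently
\[
g_i = \cfrac{1}{1 + k_i z\, g_{i+1}}.
\]
This is exactly the one-step relation encoded by the continued fraction. Substituting the relation for $g_2$ into that for $g_1$, then the relation for $g_3$, and so on, unfolds $g_1 = f_1/f_0$ into $\frac{1}{1} \cplus \frac{k_1 z}{1} \cplus \frac{k_2 z}{1} \cplus \cdots$, since $k_i z\, g_{i+1} = \frac{k_i z}{1 + k_{i+1} z\, g_{i+2}}$ at each stage reproduces one more level of the fraction.

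The one genuine point to justify is that this formal unfolding converges to $f_1/f_0$ in the ring of formal power series, i.e. that the finite convergents agree with $g_1$ to ever higher order in $z$. Here I would argue by $z$-adic continuity of the map $x \mapsto 1/(1 + k_i z\, x)$: if two series agree modulo $z^m$, then after multiplication by $k_i z$ their images agree modulo $z^{m+1}$, and since both denominators are units with constant term $1$, inversion preserves this agreement. Defining the $N$-th convergent by truncating the tail (for instance setting the innermost quotient equal to its constant term $1$, which matches $g_{N+1}$ modulo $z$), an induction on the depth shows that the $N$-th convergent coincides with $g_1$ modulo $z^{N+1}$. Letting $N \to \infty$ then yields the claimed identity.

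I expect the main obstacle to be precisely this convergence bookkeeping: phrasing the truncation cleanly and tracking the order to which each convergent matches $g_1$. The algebraic core — turning the three-term relation into the continued-fraction recursion — is immediate once the quotients $g_i$ are introduced, and the invertibility of the $f_i$ needed to define them is furnished by the constant-term computation above.
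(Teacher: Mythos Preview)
Your argument is correct and is in fact the standard proof of this classical lemma. Note, however, that the paper does not supply a proof of Lemma~\ref{lem:makefc} at all: it is stated as a preliminary result and used without justification, so there is no proof in the paper to compare against. Your write-up would serve as a complete proof; the only point worth making explicit, as you do, is the implicit assumption that the common constant term $a_{i,0}$ is nonzero so that the quotients $f_i/f_{i-1}$ live in the formal power series ring.
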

Consider the \emph{hypergeometric function} $\hg$, defined as 
\[
\hg(a;z) = \sum_{n \geq 0} \frac{1}{n!(a)_n}z^n.
\]

Using Lemma \ref{lem:makefc}, these functions naturally lead to some interesting continued fractions.
\begin{lemma}
For all $a \in \mathbb{C}\setminus(\mathbb{Z}_{\leq 0})$,
\[
\frac{\hg(a+1;z)}{\hg(a;z)} = \frac{1}{a} \cplus \frac{z}{a+1} \cplus \frac{z}{a+2} \cplus \cdots.
\]
\end{lemma}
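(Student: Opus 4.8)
The plan is to realize the continued fraction as the output of Lemma~\ref{lem:makefc} applied to the family $f_i(z) := \hg(a+i;z)$. For this to work I need a three-term relation of exactly the shape $f_{i-1} - f_i = k_i z f_{i+1}$ demanded by that lemma, i.e. a contiguous relation linking $\hg(a+i-1;z)$, $\hg(a+i;z)$ and $\hg(a+i+1;z)$ in which the shift of the parameter by one unit is compensated by a single factor of $z$. Once such a relation is found with an explicit constant $k_i$, the lemma immediately gives a continued fraction for $f_1/f_0 = \hg(a+1;z)/\hg(a;z)$, and only a rescaling will remain.

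First I would establish the contiguous relation in the symmetric form $\hg(b;z) - \hg(b+1;z) = \frac{z}{b(b+1)}\hg(b+2;z)$. Working coefficient by coefficient, the $z^n$ term of the left-hand side is $\frac{1}{n!}\bigl(\frac{1}{(b)_n} - \frac{1}{(b+1)_n}\bigr)$, so the heart of the computation is the Pochhammer identity $\frac{1}{(b)_n} - \frac{1}{(b+1)_n} = \frac{n}{b\,(b+1)_n}$, which follows from $(b+1)_n - (b)_n = n\,(b+1)_{n-1}$ together with $(b)_n = b\,(b+1)_{n-1}$. After substituting this and reindexing by $m = n-1$, using $(b+1)_{m+1} = (b+1)(b+2)_m$, the sum collapses to $\frac{z}{b(b+1)}\sum_{m\ge 0} \frac{z^m}{m!\,(b+2)_m}$, which is exactly $\frac{z}{b(b+1)}\hg(b+2;z)$. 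Setting $b = a+i-1$ then identifies the lemma's constant as $k_i = \frac{1}{(a+i-1)(a+i)}$.

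With the relation in hand, Lemma~\ref{lem:makefc} yields
\[
\frac{\hg(a+1;z)}{\hg(a;z)} = \frac{1}{1} \cplus \frac{k_1 z}{1} \cplus \frac{k_2 z}{1} \cplus \cdots, \qquad k_i = \frac{1}{(a+i-1)(a+i)}.
\]
It then remains to turn this into the advertised form. I would do this by an equivalence transformation of the continued fraction: multiplying the numerator and denominator at the $i$-th level by the factor $a+i$ converts each partial denominator $1$ into $a+i$ and each partial numerator $k_i z = \frac{z}{(a+i-1)(a+i)}$ into $z$, since the two inserted factors $(a+i-1)(a+i)$ cancel the denominator of $k_i$ exactly. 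This is where the normalization of the leading term has to be tracked carefully, and it is the only genuinely delicate point in the argument; the contiguous relation itself is a routine, if slightly fiddly, manipulation of Pochhammer symbols.
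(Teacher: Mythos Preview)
Your proposal follows exactly the paper's approach: derive the contiguous relation (the paper states it as $\hg(a-1;z)-\hg(a;z)=\tfrac{z}{a(a-1)}\hg(a+1;z)$, which is your identity with $b=a-1$) and feed it into Lemma~\ref{lem:makefc}. You supply considerably more detail than the paper's two-line proof---the coefficient-by-coefficient verification of the contiguous relation and the closing equivalence transformation---but the underlying argument is identical.
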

\begin{proof}
It results from the identity \[\hg(a-1;z) - \hg(a;z) = \frac{z}{a(a-1)}\hg(a+1;z),\] to which we apply Lemma \ref{lem:makefc} with $f_i = \hg(i;z)$ and $k_i=\frac{1}{i(i-1)}$.
\end{proof}
Since we have
\[
\arraycolsep=1.4pt\def\arraystretch{2.2}
\begin{array}{rcl}
\cos(z)&=&\hg(\frac{1}{2};-\frac{z^2}{4}),\\
\sin(z)&=&z\hg(\frac{3}{2};-\frac{z^2}{4}),
\end{array}
\]
we can write $\tan$ as a continued fraction
\begin{equation}
\label{eq:tan_cf}
\tan(z) = \frac{z\hg(\frac{3}{2};-\frac{z^2}{4})}{\hg(\frac{1}{2};-\frac{z^2}{4})} = \frac{z}{1} \cminus \frac{z^2}{3} \cminus \frac{z^2}{5} \cminus \cdots,
\end{equation}
which is also known as \emph{Lambert's continued fraction}.

The \emph{basic hypergeometric functions} $\hgq$ are $q$-analogues of hypergeometric functions, and are defined as
\[
\hgq(;a;q;z) = \sum_{n \geq 0} \frac{z^nq^{n(n-1)}}{(a;q)_n(q;q)_n}.
\]
\begin{lemma}
\label{lem:hgq_fc}
For all $a \in \mathbb{C}\setminus(\mathbb{Z}^-\cup\{0\})$,
\[
\frac{\hgq(;q^{a+1};q;qz)}{\hgq(;q^a;q;z)} = \frac{1}{1} \cplus \frac{\frac{1}{(1-q^a)(1-q^{a+1})}z}{1} \cplus \frac{\frac{q}{(1-q^{a+1})(1-q^{a+2})}z}{1} \cplus \cdots.
\]
\end{lemma}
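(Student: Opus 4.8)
The plan is to mirror the proof of the classical (non-$q$) continued fraction for $\hg$ given just above, replacing the contiguous relation for $\hg$ by its $q$-analogue and feeding the result into Lemma \ref{lem:makefc}. The one subtlety, which is also the crux of the argument, is choosing the right family of functions: because the numerator on the left carries the argument $qz$ while the denominator carries $z$, a naive family $f_i = \hgq(;q^{a+i};q;z)$ will not satisfy a clean three-term relation. Instead I would set
\[
f_i(z) := \hgq(;q^{a+i};q;q^i z) = \sum_{n \geq 0} \frac{q^{n(n-1+i)}}{(q^{a+i};q)_n (q;q)_n}\, z^n,
\]
so that $f_0 = \hgq(;q^a;q;z)$ and $f_1 = \hgq(;q^{a+1};q;qz)$ are exactly the denominator and numerator of the quotient to be computed, and the extra scaling $q^i z$ is what lets the recurrence close up.

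Writing $f_i = \sum_n c_{i,n} z^n$ with $c_{i,n} = q^{n(n-1+i)}/\big((q^{a+i};q)_n (q;q)_n\big)$, I would then verify the three-term relation $f_{i-1} - f_i = k_i z f_{i+1}$ coefficientwise, i.e.\ $c_{i-1,n} - c_{i,n} = k_i\, c_{i+1,n-1}$ for every $n \geq 1$ (the $n=0$ identity holds trivially, both sides vanishing). The key computation is the difference $c_{i-1,n} - c_{i,n}$: after factoring out the common $q$-Pochhammer symbols, one splits $(q^{a+i-1};q)_n = (1-q^{a+i-1})(q^{a+i};q)_{n-1}$ and $(q^{a+i};q)_n = (q^{a+i};q)_{n-1}(1-q^{a+i+n-1})$, and the two resulting cross terms in the numerator collapse to $1-q^n$ because the $q$-powers $q^{a+i+n-1}$ cancel exactly. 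This factor $1-q^n$ absorbs $(q;q)_n$ down to $(q;q)_{n-1}$, after which the quotient with $c_{i+1,n-1}$ is seen to be independent of $n$.

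Carrying out this cancellation yields
\[
k_i = \frac{q^{i-1}}{(1-q^{a+i-1})(1-q^{a+i})},
\]
so in particular $k_1 = 1/\big((1-q^a)(1-q^{a+1})\big)$ and $k_2 = q/\big((1-q^{a+1})(1-q^{a+2})\big)$, matching the displayed coefficients. Applying Lemma \ref{lem:makefc} to the family $(f_i)$ with these $k_i$ then expresses $f_1/f_0$ as the claimed continued fraction. The main obstacle is really the bookkeeping in the middle step: one must choose the argument scaling $q^i z$ correctly and confirm the exact cancellation of $q$-powers, which is precisely what makes $k_i$ free of $n$; once that is checked, the conclusion is an immediate appeal to Lemma \ref{lem:makefc}.
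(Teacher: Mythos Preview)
Your proposal is correct and follows essentially the same approach as the paper: you define the same family $f_i = \hgq(;q^{a+i};q;q^iz)$, establish the same three-term relation $f_{i-1}-f_i = \dfrac{q^{i-1}}{(1-q^{a+i-1})(1-q^{a+i})}\,z\,f_{i+1}$, and invoke Lemma~\ref{lem:makefc}. The only cosmetic difference is that you phrase the verification coefficientwise whereas the paper manipulates the series directly, but the underlying cancellation (the $q^{a+i+n-1}$ terms dropping out to leave a factor $1-q^n$) is identical.
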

\begin{proof}
For all $i \geq 0$, let $f_i =\ _0\Phi_1(;q^{a+i};q;zq^i)$. We have
\[
\arraycolsep=1.4pt\def\arraystretch{2.2}
\begin{array}{ll}
f_{i-1} - f_i = \hgq(;q^{a+i-1};q;zq^{i-1}) - \hgq(;q^{a+i};q;zq^i)\\
 = \sum_{n \geq 0}\left( \frac{q^{n(n-1)+n(i-1)}}{(q^{a+i-1};q)_n(q;q)_n} - \frac{q^{n(n-1)+ni}}{(q^{a+i};q)_n(q;q)_n} \right) z^n\\
 = \sum_{n \geq 0} \left( \frac{q^{n(n-1)+n(i-1)}(1-q^{a+i-1+n})}{(1-q^{a+i-1})(1-q^{a+i})} - \frac{q^{n(n-1)+ni}}{(1-q^{a+i})} \right)\frac{z^n}{(q^{a+i+1};q)_{n-1}(q;q)_n}\\
 = \sum_{n \geq 1} \frac{q^{n(n-1)+n(i-1)}(1-q^{a+i-1+n}) - q^{n(n-1)+ni}(1-q^{a+i-1})}{(1-q^{a+i-1})(1-q^{a+i})(1-q^n)}\frac{z^n}{(q^{a+i+1};q)_{n-1}(q;q)_{n-1}}\\
 = \frac{z}{(1-q^{a+i-1})(1-q^{a+i})} \sum_{n \geq 1} (q^{n(n-1)+n(i-1)})\frac{z^{n-1}}{(q^{a+i+1};q)_{n-1}(q;q)_{n-1}}\\
 = \frac{zq^{i-1}}{(1-q^{a+i-1})(1-q^{a+i})}  \sum_{n \geq 1} \frac{q^{(n-1)(i+1)}q^{(n-1)(n-2)}z^{n-1}}{(q^{a+i+1};q)_{n-1}(q;q)_{n-1}}\\
 =\frac{zq^{i-1}}{(1-q^{a+i-1})(1-q^{a+i})}f_{i+1}.\\
\end{array}
\]
The result follows from Lemma \ref{lem:makefc}.
\end{proof}
\subsection{Hook formula for skew shapes}
In this section, we present a Hook formula for skew shapes due to Morales, Pak and Panova in \cite{morales2016hook}, and show how it links alternating permutations with Dyck paths. Let us first give some useful definitions. Let $\lambda = ( \lambda_1, \ldots , \lambda_r )$ be an integer partition of length~$r$. The size of the partition is denoted by $|\lambda|$, and $\lambda'$ denotes the conjugate partition of $\lambda$. The \emph{Young diagram} associated with $\lambda$, denoted by $[\lambda]$, is a diagram made of squares, where the $i$-th row is composed of $\lambda_i$ squares. We use the french notation to represent them. The \emph{hook length} $h(i,j)$ of a square $(i,j)$ is the number of squares directly to the right and above the square in $[\lambda]$, including itself. A \emph{standard Young tableau} of shape $\lambda$ is an array $T$ of integers $1,\ldots,n$ of shape $\lambda$ strictly increasing in columns and strictly decreasing in rows. We note $\SYT(\lambda)$ the set of such tableaux. A \emph{descent} of a standard Young tableau $T$ is an index $i$ such that $i+1$ appears in a row above $i$. The \emph{major index} $\tmaj(T)$ is the sum over all the descents of $T$. Introduced by Frame, Robinson, and Thrall in \cite{frame1954hook}, the \emph{hook length formula} gives the number of standard Young tableaux of a given shape, and is stated as follows.

\begin{theorem}[Hook length formula \cite{frame1954hook}]
Let $\lambda$ be a partition of $n$. We have
\[
|\SYT(\lambda)| = \frac{n!}{\prod_{u \in [\lambda]}h(u)}.
\]
\end{theorem}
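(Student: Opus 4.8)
The plan is to prove the formula by induction on $n=|\lambda|$, following the probabilistic argument of Greene, Nijenhuis and Wilf. Write $F(\lambda) := n!/\prod_{u\in[\lambda]}h(u)$ for the right-hand side; the base case $n\le 1$ is immediate, so it suffices to show that $|\SYT(\lambda)|$ and $F(\lambda)$ obey the same recurrence. For the tableaux this is clear: deleting from $T\in\SYT(\lambda)$ the cell holding the largest entry produces a standard Young tableau of shape $\lambda\setminus c$ for some removable corner $c$, and this correspondence is a bijection
\[
\SYT(\lambda)\;\cong\;\bigsqcup_{c\text{ corner}}\SYT(\lambda\setminus c),\qquad\text{hence}\qquad |\SYT(\lambda)|=\sum_{c}|\SYT(\lambda\setminus c)|.
\]
By the induction hypothesis $|\SYT(\lambda\setminus c)|=F(\lambda\setminus c)$, so the whole theorem reduces to the single identity $F(\lambda)=\sum_c F(\lambda\setminus c)$, i.e. $\sum_c F(\lambda\setminus c)/F(\lambda)=1$.

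First I would compute each ratio explicitly. If $c=(r,s)$ is a corner then, being the extreme cell of its row and of its column, it lies in the arm of every other cell in row $r$ and in the leg of every other cell in column $s$; removing it therefore decreases by exactly $1$ the hook length of each of those cells and leaves all other hook lengths unchanged. Since $h(c)=1$, a direct cancellation gives
\[
\frac{F(\lambda\setminus c)}{F(\lambda)}=\frac{1}{n}\prod_{j<s}\Bigl(1+\frac{1}{h(r,j)-1}\Bigr)\prod_{i<r}\Bigl(1+\frac{1}{h(i,s)-1}\Bigr).
\]
The task is then to show that these numbers, one per corner, sum to $1$, and this is exactly what the \emph{hook walk} accomplishes: pick a cell of $[\lambda]$ uniformly at random (probability $1/n$), then repeatedly jump from the current cell $u$ to a cell chosen uniformly among the $h(u)-1$ other cells of its hook, until the walk reaches a corner. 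The key lemma asserts that the probability this walk terminates at the corner $c$ equals precisely the displayed quantity $F(\lambda\setminus c)/F(\lambda)$. Since the walk ends at some corner with probability $1$, summing over corners yields $\sum_c F(\lambda\setminus c)/F(\lambda)=1$, which closes the induction.

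The main obstacle is the proof of this last lemma, namely the evaluation of the termination probability. I would establish it by conditioning on the projected trajectory: a walk ending at $c=(r,s)$ is recorded by the increasing sequence of rows $i_1<\cdots<i_p=r$ and columns $j_1<\cdots<j_q=s$ it visits, and the probability of such a trajectory factors into terms of the form $1/(h(u)-1)$. Summing these contributions over all admissible starting cells and intermediate sets, and telescoping the resulting sums over the rows and over the columns independently, produces exactly the two products $\prod_{j<s}\bigl(1+1/(h(r,j)-1)\bigr)$ and $\prod_{i<r}\bigl(1+1/(h(i,s)-1)\bigr)$. This combinatorial bookkeeping is the only delicate point; everything else is the formal induction set up above. (A purely algebraic verification of the same identity $\sum_c F(\lambda\setminus c)=F(\lambda)$ is possible, but the hook-walk interpretation makes the telescoping transparent.)
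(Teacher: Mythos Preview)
The paper does not contain a proof of this theorem: it is stated in the preliminaries as a classical result due to Frame, Robinson and Thrall and is cited without argument. There is therefore no ``paper's own proof'' to compare against.

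Your sketch follows the probabilistic hook-walk argument of Greene, Nijenhuis and Wilf, which is one of the standard proofs of the formula. The outline is correct: the reduction via the corner-deletion recurrence to the identity $\sum_c F(\lambda\setminus c)=F(\lambda)$, the explicit computation of the ratio
\[
\frac{F(\lambda\setminus c)}{F(\lambda)}=\frac{1}{n}\prod_{j<s}\frac{h(r,j)}{h(r,j)-1}\prod_{i<r}\frac{h(i,s)}{h(i,s)-1},
\]
and the interpretation of this ratio as the probability that the hook walk terminates at the corner $c=(r,s)$ are all accurate. The one step you flag as delicate---the telescoping evaluation of the termination probability by conditioning on the sets of rows and columns visited---is indeed the substantive part of the GNW proof, but it is well known and your description of how it goes is right. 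Since the paper offers no proof, there is nothing to contrast; your argument stands on its own as a valid proof of the stated theorem.
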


This can be extended to skew shapes. Let $\lambda$ and $\mu$ be two integer partitions. We write that $\mu \subset \lambda$ if, for all $i$, $\mu_i \leq \lambda_i$. In this case, we can define the \emph{skew shape} $\lambda / \mu$ as the shape obtained by removing $\mu$ from $\lambda$. Remark that the definition of standard Young tableaux can be easily extended to skew shapes. An \emph{excited diagram} of $\lambda / \mu$ is a subset of $[\lambda]$ of size $|\mu|$ obtained from $[\mu]$ by a sequence of excited moves:
\[
\ytableausetup{smalltableaux,centertableaux}
\ydiagram{2,2}*[*(gray)]{0,1} \hspace{0.2cm} \rightarrow \hspace{0.2cm} \ydiagram{2,2}*[*(gray)]{1+1,0}
\]
The move $(i,j) \rightarrow (i+1,j+1)$ is allowed only if cells $(i,j+1)$,$(i+1,j)$ and $(i+1,j+1)$ are unoccupied. The resulting set of excited diagrams of $\lambda / \mu$ is denoted $ \mathcal{E} (\lambda / \mu)$. Naruse in \cite{naruse2014schubert} obtained the following formula, generalizing the previous one.

\begin{theorem}[Naruse hook length formula \cite{naruse2014schubert}]
Let $\lambda, \mu$ be partitions, such that $\mu \subset \lambda$. We have
\[
|\SYT(\lambda / \mu)|= |\lambda / \mu|!\sum_{D \in \mathcal{E}(\lambda / \mu)}\prod_{u\in[\lambda]\setminus D}\frac{1}{h(u)}.
\]
\end{theorem}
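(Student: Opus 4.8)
The plan is to reduce Naruse's identity first to a determinant, and then to a count of non-intersecting lattice paths that is naturally indexed by excited diagrams. I would begin from the Jacobi--Trudi identity $s_{\lambda/\mu}=\det\!\big[h_{\lambda_i-\mu_j-i+j}\big]_{i,j=1}^{\ell}$ (here $\ell$ is the number of rows of $\lambda$, padding $\mu$ with zeros) and apply the exponential specialization $h_k\mapsto 1/k!$ (with $1/k!=0$ for $k<0$), under which a skew Schur function is sent to $|\SYT(\lambda/\mu)|/|\lambda/\mu|!$. This yields the Aitken--Feit determinant
\[
|\SYT(\lambda/\mu)|=|\lambda/\mu|!\,\det\!\left[\frac{1}{(\lambda_i-\mu_j-i+j)!}\right]_{i,j=1}^{\ell},
\]
so that the whole problem becomes the identification of this determinant with $\sum_{D\in\mathcal{E}(\lambda/\mu)}\prod_{u\in[\lambda]\setminus D}1/h(u)$.

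For that identification I would expand the determinant by the Lindström--Gessel--Viennot lemma. Each entry $1/(\lambda_i-\mu_j-i+j)!$ is the exponentially-weighted generating function of a monotone lattice path between endpoints prescribed by $\lambda$ and $\mu$, so the determinant equals the signed sum over families of such paths, and only the non-intersecting families contribute. The core of the argument is then a weight-preserving bijection between these non-intersecting path systems inside $[\lambda]$ and the excited diagrams $D\in\mathcal{E}(\lambda/\mu)$, arranged so that the path weight of a family equals exactly $\prod_{u\in[\lambda]\setminus D}1/h(u)$. This is the path model for excited diagrams of Kreiman and of Ikeda--Naruse, in which the cells uncovered by the paths are precisely the complement $[\lambda]\setminus D$ and their full-shape hook lengths record the relevant step data.

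I expect the main obstacle to be exactly this final matching of weights: checking cell by cell that the multinomial weight produced by LGV coincides with the product of reciprocal hook lengths over $[\lambda]\setminus D$. I would organize this by induction on the number of excited moves needed to reach $D$ from $[\mu]$, verifying that a single move $(i,j)\to(i+1,j+1)$ multiplies both sides by the same factor, with the base case $D=[\mu]$ computed directly; specializing to $\mu=\varnothing$ should recover the classical hook-length formula as a consistency check.

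As an alternative, following Naruse more closely, one could instead establish the excited-diagram expansion in the torus-equivariant cohomology of the Grassmannian, where $\mathcal{E}(\lambda/\mu)$ indexes the terms of the restriction of a Schubert class to a fixed point (the Ikeda--Naruse formula), and then extract the stated identity by taking the top-degree part after a suitable specialization of the equivariant parameters. In that route the genuine content is the Ikeda--Naruse restriction formula itself, which is proved by a divided-difference recursion rather than by lattice paths.
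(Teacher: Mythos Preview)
The paper does not actually prove this theorem: it is stated as a background result due to Naruse and attributed to \cite{naruse2014schubert}, with no proof given. The paper's own contributions lie elsewhere (Theorem~\ref{th:toprove} and its companions), which are proved via continued fractions precisely to \emph{avoid} invoking the hook-length machinery. So there is no ``paper's own proof'' of this statement to compare your proposal against.

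That said, your outline is a reasonable sketch of one of the standard routes in the literature. The Aitken--Feit determinant combined with a Lindstr\"om--Gessel--Viennot interpretation and the Kreiman/Ikeda--Naruse path model for excited diagrams is essentially the approach carried out in detail by Morales, Pak and Panova and by Konvalinka; your alternative via equivariant cohomology is closer to Naruse's original argument. Be aware that the step you flag as the main obstacle---matching the LGV weights to the product $\prod_{u\in[\lambda]\setminus D}1/h(u)$---is genuinely the heart of the matter, and the clean inductive check on a single excited move requires first setting up the right bijection between non-intersecting path families and excited diagrams (the ``flagged tableaux'' or ``excited peaks'' description); without that, the weights do not match on the nose. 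If you intend to write this up, you will need to make that bijection precise rather than just cite its existence.
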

Remark that if $\mu = \emptyset$, we obtain the original hook length formula.
Morales, Pak, and Panova in \cite{morales2016hook} proved a $q$-analogue of this formula.

\begin{theorem}[Morales, Pak, Panova, \cite{morales2016hook}]
\label{th:qhlf}
Let $\lambda, \mu$ be partitions, such that $\mu \subset \lambda$, and $n = |\lambda / \mu|$. We have
\[
\sum_{T \in SYT(\lambda / \mu)}q^{tmaj(T)} = (q;q)_n\sum_{D \in \mathcal{E}(\lambda / \mu)}\prod_{(i,j)\in[\lambda]\setminus D}\frac{q^{\lambda'_j-i}}{1-q^{h(i,j)}}.
\]

\end{theorem}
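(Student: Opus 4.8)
The plan is to prove the identity by factoring it through the theory of skew Schur functions, reducing the statement to two more manageable claims: a ``left-hand'' identity relating the $\tmaj$-generating function over $\SYT(\lambda/\mu)$ to a principal specialization of the skew Schur function $s_{\lambda/\mu}$, and a ``right-hand'' identity expressing that same specialization as a sum over excited diagrams. Combining the two and clearing the factor $(q;q)_n$ then yields the theorem.

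First I would recall the standard consequence of Stanley's theory of $(P,\omega)$-partitions: the stable principal specialization of a skew Schur function is the major-index generating function over standard Young tableaux,
\[
\sum_{T \in \SYT(\lambda/\mu)} q^{\tmaj(T)} \;=\; (q;q)_n \, s_{\lambda/\mu}(1,q,q^2,\ldots).
\]
One should check that the descent convention defining $\tmaj$ here (namely, $i$ is a descent when $i+1$ lies in a higher row) is the one matching the standardization order underlying this expansion; this is a matter of fixing transposition/reversal conventions, and it is quickly verified on the two shapes $\lambda=(2)$ and $\lambda=(1,1)$, for which both sides equal $1$ and $q$ respectively.

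The substantive step is to establish
\[
s_{\lambda/\mu}(1,q,q^2,\ldots) \;=\; \sum_{D \in \mathcal{E}(\lambda/\mu)} \; \prod_{(i,j)\in[\lambda]\setminus D} \frac{q^{\lambda'_j-i}}{1-q^{h(i,j)}}.
\]
For this I would use the Jacobi--Trudi determinant $s_{\lambda/\mu}=\det\big(h_{\lambda_i-\mu_j-i+j}\big)$ and interpret it, through the Lindstr\"om--Gessel--Viennot lemma, as the generating function for families of non-intersecting lattice paths inside $[\lambda]$. The essential geometric input is the bijection of Kreiman and of Ikeda--Naruse between such non-intersecting path families and the excited diagrams $D\in\mathcal{E}(\lambda/\mu)$, under which the cells of $[\lambda]\setminus D$ are exactly those swept out by the paths. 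Specializing $x_i\mapsto q^{i-1}$ (equivalently $q^i$ after an overall shift) and collecting the resulting geometric series, I would verify that the weight of the family attached to $D$ collapses to $\prod_{(i,j)\in[\lambda]\setminus D} q^{\lambda'_j-i}/(1-q^{h(i,j)})$. As a consistency check, when $\mu=\emptyset$ there is a single excited diagram and $\sum_{(i,j)\in[\lambda]}(\lambda'_j-i)=\sum_j\binom{\lambda'_j}{2}=n(\lambda)$, so the formula degenerates precisely to the classical hook-content specialization $s_\lambda(1,q,q^2,\ldots)=q^{n(\lambda)}/\prod_{u\in[\lambda]}(1-q^{h(u)})$.

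The hard part will be this last weight-matching: showing that the product of LGV path weights, each an infinite geometric series after specialization, reorganizes into the hook-length denominators $1-q^{h(i,j)}$ together with the exact numerators $q^{\lambda'_j-i}$. The delicate point is that each excited move at a cell $(i,j)$ trades one hook for another while shifting the relevant exponents, so the bijection must be set up to track these changes cell by cell. An alternative that avoids the Schur-function reduction is to interpret the right-hand side directly as a generating function for reverse plane partitions of shape $\lambda/\mu$ and to prove that identity via the Hillman--Grassl correspondence (as in \cite{morales2016hook}), relating reverse plane partitions to $\tmaj$ over $\SYT(\lambda/\mu)$ separately; but in either route the excited-diagram weight computation remains the crux.
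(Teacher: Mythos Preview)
The paper does not contain its own proof of this theorem: it is quoted as a result of Morales, Pak and Panova \cite{morales2016hook} and then used as a black box to derive Theorem~\ref{th:toprove}. There is therefore nothing in the present paper to compare your proposal against.

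For what it is worth, your outline is a faithful sketch of the strategy in the original reference. The factorization through the stable principal specialization of $s_{\lambda/\mu}$ is precisely how Morales--Pak--Panova organize one of their two proofs: the left-hand identity is Stanley's $(P,\omega)$-partition formula, and the right-hand identity is their main contribution, obtained either through the Ikeda--Naruse factorial-Schur and non-intersecting lattice-path machinery or, alternatively, through a skew Hillman--Grassl correspondence on reverse plane partitions. You correctly locate the crux at the weight-matching step---showing that the LGV weight of the path family attached to $D$ collapses to $\prod_{(i,j)\in[\lambda]\setminus D} q^{\lambda'_j-i}/(1-q^{h(i,j)})$---and you are candid that your proposal does not actually carry that step out. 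So what you have written is an accurate roadmap rather than a proof; the substantive lemma is asserted, not established.
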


\begin{figure}
\center
\ytableausetup{nosmalltableaux}
\ytableaushort{6,13,\none 27, \none \none 45}*{1,2,3,4}*[*(gray)]{0,0,1,2}
\caption{The skew standard Young tableaux associated with the alternating permutation $(6,1,3,2,7,4,5)$.}
\label{fig:syt1}
\end{figure}

As means of example, they applied their result to the following skew shape: let $\delta_m = (m,m-1,\ldots,1)$ be an integer partition of the shape of a staircase of height $m$. Consider the border strip $\delta_{m+2} / \delta_m$. We can easily see that $\SYT(\delta_{m+2} / \delta_m)$ is in bijection with alternating permutations of length $2m+1$, as shown in Figure \ref{fig:syt1}. Moreover, $\mathcal{E}(\delta_{m+2} / \delta_m)$ is in bijection with Dyck paths of length $2m$. Moreover, in this case we have $\sum_{\sigma \in \textit{Alt}_{2m+1}}q^{\maj(\sigma^{-1})}~=~\sum_{T \in SYT(\delta_{m+2} / \delta_m)}q^{tmaj(T)}$. Hence, applying Theorem \ref{th:qhlf}, we obtain the following identity
\begin{theorem}[Morales, Pak, Panova, \cite{morales2016hook}]
\label{th:toprove}
\[
\sum_{d \in \mathcal{D}_n}\prod_{i=0}^{2n}\frac{q^{h_i}}{1-q^{2h_i+1}} = \frac{E_{2n+1}(q)}{(q;q)_{2n+1}}.
\]

\end{theorem}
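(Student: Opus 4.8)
The plan is to prove the identity for all $n$ at once by packaging both sides into generating functions in a single formal variable and showing that these two series have the same continued fraction expansion. On the left, I multiply the fixed-$n$ weight $\omega(h) = \frac{q^h}{1-q^{2h+1}}$ against $z^{2n}$ and sum over all Dyck paths; by Lemma~\ref{lem:fc} this generating function equals the Stieltjes continued fraction $\frac{1}{\omega(0)^{-1}}\cminus\frac{z^2}{\omega(1)^{-1}}\cminus\frac{z^2}{\omega(2)^{-1}}\cminus\cdots$, whose partial denominators are $\omega(j)^{-1} = \frac{1-q^{2j+1}}{q^j}$. On the right, I recognise $\sum_n \frac{E_{2n+1}(q)}{(q;q)_{2n+1}}u^{2n}$ as $u^{-1}\tan_q(z)$, and invoke the Remark's identity $\tan_q = \tan_q^*$ so that I may work with the starred functions, which are the ones that fit the $\hgq$ framework.

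First I would identify $\cos_q^*$ and $\sin_q^*$ with basic hypergeometric functions in base $q^2$. Using the factorisation $(q;q)_{2n} = (q;q^2)_n(q^2;q^2)_n$ and matching the exponent $\binom{2n}{2} = 2n(n-1)+n$ against the $q^{n(n-1)}$ appearing in the definition of $\hgq$, I expect the identifications $\cos_q^*(z) = \hgq(;q;q^2;-qu^2)$ and $\sin_q^*(z) = \frac{u}{1-q}\hgq(;q^3;q^2;-q^3u^2)$, the latter using $(q;q^2)_{n+1} = (1-q)(q^3;q^2)_n$. Applying Lemma~\ref{lem:hgq_fc} with $q$ replaced by $q^2$ and $a = \tfrac{1}{2}$ then expresses $\tan_q^*(z) = \frac{u}{1-q}\cdot\frac{\hgq(;q^3;q^2;-q^3u^2)}{\hgq(;q;q^2;-qu^2)}$ as a continued fraction whose $j$-th partial numerator is $\frac{-q^{2j-1}u^2}{(1-q^{2j-1})(1-q^{2j+1})}$.

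The two continued fractions are then matched by the standard equivalence transformation (scaling the $j$-th numerator/denominator pair of the Dyck path fraction by $d_j = q^j/(1-q^{2j+1})$), which normalises every partial denominator to $1$ and turns the $j$-th partial numerator into exactly $\frac{-q^{2j-1}z^2}{(1-q^{2j-1})(1-q^{2j+1})}$. Under the identification $z = u$ this agrees term-by-term with the $\tan_q^*$ fraction, so the Dyck path generating function equals $u^{-1}\tan_q^*(z) = \sum_n \frac{E_{2n+1}(q)}{(q;q)_{2n+1}}z^{2n}$; extracting the coefficient of $z^{2n}$ then yields Theorem~\ref{th:toprove}. As a sanity check, the $n=0$ terms already agree: the empty Dyck path contributes $\omega(0) = \frac{1}{1-q}$, matching $\frac{E_1(q)}{(q;q)_1} = \frac{1}{1-q}$.

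I expect the main obstacle to be the second step, namely pinning down the precise $\hgq$ representations of $\cos_q^*$ and $\sin_q^*$. The unstarred $\cos_q$ does \emph{not} reduce to a single $\hgq$, since its putative argument would carry an $n$-dependent power of $q$; it is therefore essential to pass to the starred versions via $\tan_q = \tan_q^*$, and the delicate bookkeeping lies in reconciling the quadratic $q$-exponents $\binom{2n}{2}$, $\binom{2n+1}{2}$ with the $q^{n(n-1)}$ of $\hgq$ in base $q^2$ while correctly splitting $(q;q)_{2n}$ and $(q;q)_{2n+1}$ into even- and odd-index Pochhammer factors. Once those two identifications are secured, the remaining steps are routine continued fraction equivalence transformations.
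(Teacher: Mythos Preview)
Your proposal is correct and follows essentially the same route as the paper: both arguments form the generating function $\sum_{n\ge 0}\bigl(\sum_{d\in\mathcal D_n}\prod_i \omega(h_i)\bigr)z^{2n}$, expand it as a Stieltjes continued fraction via Lemma~\ref{lem:fc}, identify $\cos_q^*(z)=\hgq(;q;q^2;-qz^2)$ and $\sin_q^*(z)=\frac{z}{1-q}\hgq(;q^3;q^2;-q^3z^2)$, and then match against the continued fraction of Lemma~\ref{lem:hgq_fc} (with $q\mapsto q^2$, $a=\tfrac12$) to conclude that the series equals $\tan_q^*(z)/z=\tan_q(z)/z$. The only cosmetic difference is that the paper works directly with the first (already normalised) form of the continued fraction in Lemma~\ref{lem:fc}, whereas you start from the second form and perform the equivalence transformation explicitly.
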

Our goal in the next section is to give an alternative proof of this identity.
\section{Dyck paths and Euler numbers}
\subsection{Tangent numbers}

\begin{figure}
\centering
\includegraphics[scale=0.9]{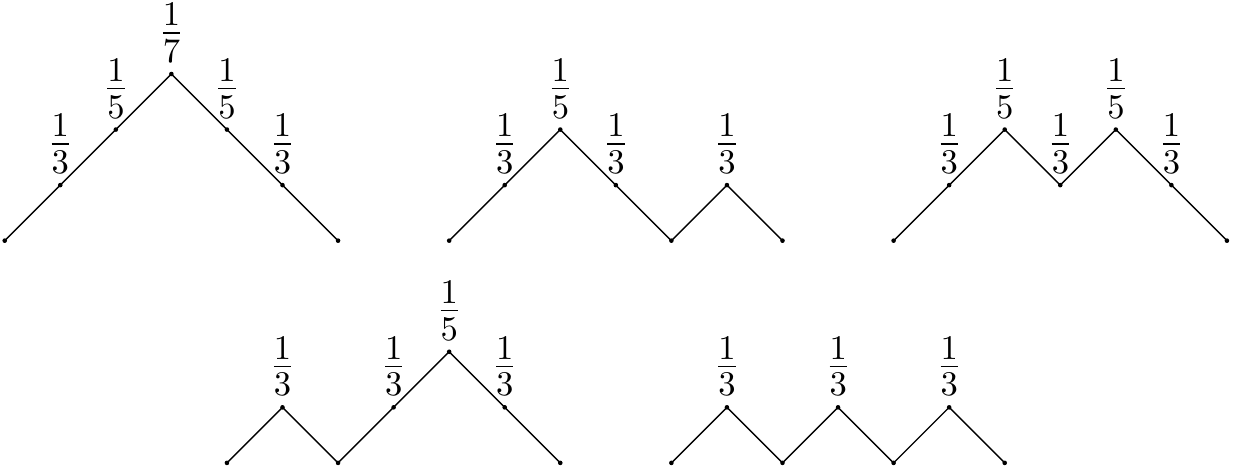}
\caption{The five weighted paths of $\mathcal{D}_3$, with vertices annotated with their weights.}
\label{fig:dyck}
\end{figure}

Before proving Theorem \ref{th:toprove}, we shall look at the corollary formula obtained with $q \rightarrow 1$:
\[
\sum_{(h_0,h_1,\ldots,h_{2n})  \in \mathcal{D}_n}\frac{(2n+1)!}{\prod_{i=0}^{2n}\left(2h_i+1\right)} = E_{2n+1}.
\]
The left hand side of this equation can be seen as the sum over all weighted Dyck paths of length $2n$. For instance, if $n=3$, 
we have all the weighted paths as shown in Figure \ref{fig:dyck}, and summing them up 
we get \[7!\cdot\left( \frac{1}{3^3} + \frac{1}{3^3\cdot5}+\frac{1}{3^3\cdot5}+\frac{1}{3^3\cdot5^2}+\frac{1}{3^2\cdot5^2\cdot7}\right) = 272 = E_7.\] Even though the weight of each path is a rational number, they non-trivially add up to an integer. This remark oriented our research towards a more direct approach to prove this formula.
\begin{proof}[Proof of Theorem \ref{th:toprove}]
Let $u(q;z) = \sum_{n \geq 0}\sum_{d \in \mathcal{D}_n}\prod_{i=0}^{2n}\frac{q^{h_i}}{1-q^{2h_i+1}}z^{2n}$. Using Lemma \ref{lem:fc} with $\omega(x)~=~\frac{q^x}{1-q^{2x+1}}$, we have
\[
u(q;z) = \frac{\frac{1}{1-q}}{1} \cminus \frac{\frac{q}{(1-q)(1-q^3)}z^2}{1} \cminus \frac{\frac{q^3}{(1-q^3)(1-q^5)}z^2}{1} \cminus \cdots .
\]
Using Lemma \ref{lem:hgq_fc}, with $z = \frac{-z^2q^a}{4}$, $a = \frac{1}{2}$, and $q = q^2$, we have
\[
\frac{1}{1-q}\frac{_0\Phi_1(;q^3;q^2;-z^2q^3)}{_0\Phi_1(;q;q^2;-z^2q)} = u(q;z).
\]
Remark that
\[
\frac{z\hgq(;q^3;q^2;-z^2q^3)}{1-q} = \sin^*_q(z),
\]
and
\[
_0\Phi_1(;q;q^2:-z^2q) = \cos^*_q(z).
\]
Hence,
\[
u(q;z) = \frac{\tan^*_q(z)}{z} = \frac{\tan_q(z)}{z}.
\]
\end{proof}
\begin{remark}
The proof uses a continued fraction for $tan_q$, that is
\[
\tan_q(z) = \frac{z}{1-q} \cminus \frac{z^2}{\frac{1-q^3}{q}} \cminus \frac{z^2}{\frac{1-q^5}{q^2}} \cminus \cdots,
\]
which gives back Lambert's continued fraction (\ref{eq:tan_cf}) for $\tan$ by taking $z = (1-q)z$ and $q \rightarrow 1$.
\end{remark}
Similarly, as we define $u_h(q;z)$ to be the generating function for $h$-shifted Dyck paths, we can prove that, for $h \geq 0$,
\begin{equation}
\label{eq:sdf}
u_h(q;z) = \frac{q^h}{1-q^{2h+1}}\frac{_0\Phi_1(;q^{2h+3};q^2;-z^2q^{2h+3})}{_0\Phi_1(;q^{2h+1};q^2:-z^2q^{2h+1})}.
\end{equation}

\subsection{Secant numbers}

We can prove a similar formula for $q$-Secant Numbers. Let us consider the skew shape $\delta^*_{n+2} / \delta_n$, where $\delta^*_n$ is the shape $\delta_n$ without its topmost square. This way, the hooks of the first column decrease by one. $\SYT(\delta^*_{n+2} / \delta_n)$ is in bijection with $\textit{Alt}_{2n}$, and $\mathcal{E}(\delta^*_{n+2} / \delta_n)$ is in bijection with Dyck paths of length $2n$ for which we remove the first step. The weight for the vertices of those paths is still $\frac{q^h}{1-q^{2h+1}}$ at height $h$, except for the first ascent of the path, for which it becomes $\frac{q^{h}}{1-q^{2h}}$, and we ignore the first step. We do the same for the conjugate skew shape. With Theorem \ref{th:qhlf}, it translates into the following equalities
\begin{theorem}

\[
\arraycolsep=1.4pt\def\arraystretch{2.2}
\begin{array}{rcl}
\sum_{d \in \mathcal{D}_n}\prod_{i=1}^{2n}\frac{q^{h_i}}{1-q^{2h_i+1-\delta_{h_i,i}}} &=& \frac{E^*_{2n}(q)}{(q;q)_{2n}},\\
\sum_{d \in \mathcal{D}_n}\prod_{i=1}^{2n}\frac{q^{h_i-\delta_{h_i,i}}}{1-q^{2h_i+1-\delta_{h_i,i}}} &=& \frac{E_{2n}(q)}{(q;q)_{2n}},\\
\end{array}
\]
where $\delta_{i,j}$ is the Kronecker delta.
\end{theorem}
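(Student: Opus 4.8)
The plan is to adapt the argument used for Theorem \ref{th:toprove}. Write $v(q;z)=\sum_{n\ge0}\big(\sum_{d\in\mathcal D_n}\prod_{i=1}^{2n}\frac{q^{h_i}}{1-q^{2h_i+1-\delta_{h_i,i}}}\big)z^{2n}$ for the first sum. The obstruction is that the modified weight $\frac{q^h}{1-q^{2h}}$ occurs only on the initial ascent (precisely the vertices with $h_i=i$), so the weight attached to a vertex is no longer a function of its height alone and Lemma \ref{lem:fc} cannot be applied directly. I would therefore decompose each path according to the height $k$ of its first peak: the initial ascent contributes $z^k\prod_{j=1}^k\frac{q^j}{1-q^{2j}}=z^k\frac{q^{\binom{k+1}{2}}}{(q^2;q^2)_k}$, and what remains is a tail from height $k$ down to $0$ whose first step is a descent and which carries the ordinary weights $\omega(h)=\frac{q^h}{1-q^{2h+1}}$ at every vertex after the peak.

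Next I would compute the tail generating functions using the shifted excursion series. Let $u_h(q;z)$ be as in (\ref{eq:sdf}) and write $\phi_h=\hgq(;q^{2h+1};q^2;-z^2q^{2h+1})$, so that $u_h=\frac{q^h}{1-q^{2h+1}}\frac{\phi_{h+1}}{\phi_h}$. A first-passage decomposition, peeling off the successive first descents to levels $k-1,k-2,\dots,0$ (each intervening stretch being a level-$j$ excursion counted by $u_j$ with its base vertex unweighted), shows that the tail from height $k$ equals $z^k\prod_{j=0}^{k-1}u_j$. This product telescopes to $\prod_{j=0}^{k-1}u_j=\frac{q^{\binom k2}}{(q;q^2)_k}\frac{\phi_k}{\phi_0}$, and since $\phi_0=\cos^*_q(z)$ and $(q^2;q^2)_k(q;q^2)_k=(q;q)_{2k}$, collecting the two contributions (the $k=0$ term being the empty path) gives $v(q;z)=\frac{1}{\cos^*_q(z)}\sum_{k\ge0}\frac{q^{k^2}z^{2k}}{(q;q)_{2k}}\phi_k$.

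It then remains to prove the key identity $\sum_{k\ge0}\frac{q^{k^2}z^{2k}}{(q;q)_{2k}}\hgq(;q^{2k+1};q^2;-z^2q^{2k+1})=1$, which is the crux of the argument. Extracting $[z^{2N}]$ and setting $j=N-k$, I would use $(q;q)_{2k}(q^{2k+1};q^2)_{N-k}=(q^2;q^2)_k(q;q^2)_N$ together with the exponent simplification $k^2+(2k+1)(N-k)+2(N-k)(N-k-1)=N^2+j(j-1)$ to reduce the coefficient to $\frac{q^{N^2}}{(q;q^2)_N(q^2;q^2)_N}\sum_{j=0}^N(-1)^jq^{j(j-1)}\binom{N}{j}_{q^2}$. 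By the finite $q$-binomial theorem the inner sum equals $(1;q^2)_N$, which vanishes for $N\ge1$ because of its factor $1-q^0$ and equals $1$ for $N=0$; hence the series is $1$ and $v(q;z)=\sec^*_q(z)$. Comparing with the even part of Stanley's expansion $\sum_n\frac{z^n}{(q;q)_n}E^*_n(q)=\tan^*_q(z)+\sec^*_q(z)$ then yields the first identity. I expect this $q$-binomial reduction — in particular getting the exponent bookkeeping and the factorization of the Pochhammer symbols right — to be the main technical obstacle, while the combinatorial decomposition is routine once the first-peak viewpoint is adopted.

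Finally, I would not reprove the second (conjugate) identity from scratch but deduce it from the first. Its summand differs from the first only by the factor $\prod_i q^{-\delta_{h_i,i}}$, and a short computation shows that replacing $q$ by $1/q$ in the first identity turns its summand into $q^{2n}$ times the second one, whence $\sum_d\prod_{i=1}^{2n}\frac{q^{h_i-\delta_{h_i,i}}}{1-q^{2h_i+1-\delta_{h_i,i}}}=q^{-2n}\frac{E^*_{2n}(1/q)}{(1/q;1/q)_{2n}}$. Using (\ref{eq:imp}) in the form $E^*_{2n}(1/q)=q^{-\binom{2n}{2}}E_{2n}(q)$ and $(1/q;1/q)_{2n}=q^{-\binom{2n+1}{2}}(q;q)_{2n}$, the accumulated power of $q$ is $-2n-\binom{2n}{2}+\binom{2n+1}{2}=0$, so the right-hand side collapses to $\frac{E_{2n}(q)}{(q;q)_{2n}}$, as desired.
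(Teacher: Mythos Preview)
Your proposal is correct and follows essentially the same route as the paper: the first-peak decomposition yielding $z^{2k}\frac{q^{k^2}}{(q;q)_{2k}}\phi_k/\phi_0$ via the telescoping product of the $u_j$, followed by the collapse of $\sum_{k\ge0}\frac{q^{k^2}z^{2k}}{(q;q)_{2k}}\phi_k$ to $1$ through the Cauchy $q$-binomial theorem, is exactly the paper's argument (their $v$ omits the $n=0$ term, whence the extra $-1$). Your explicit $q\mapsto 1/q$ derivation of the second identity is a slightly more detailed version of the paper's one-line appeal to (\ref{eq:imp}); the one minor slip is the parenthetical ``with its base vertex unweighted'' --- $u_j$ as defined in (\ref{eq:sdf}) does carry the weight of its base vertex, and indeed that is what makes the product $z^k\prod_{j=0}^{k-1}u_j$ account for every tail vertex exactly once --- but your formulas are correct as written.
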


\begin{proof}

\begin{figure}
\centering
\includegraphics[scale=0.9]{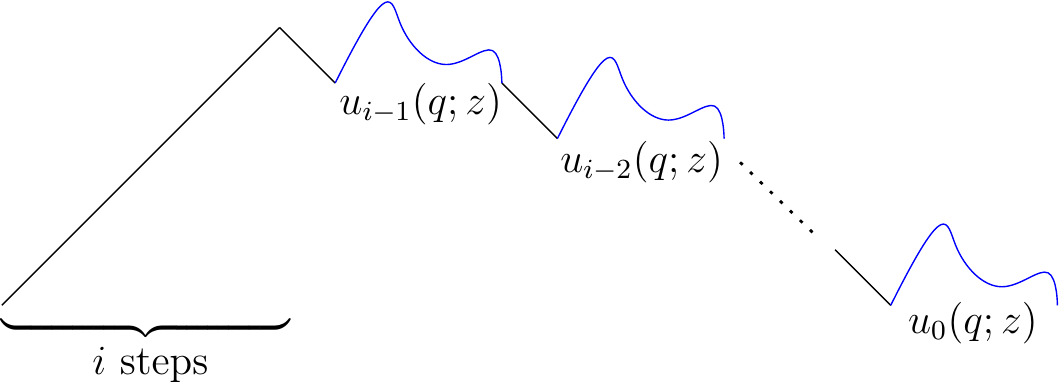}
\caption{An element of $\mathcal{D}_n$ starting with exactly $i$ ascending steps. The weight of the blue sub-paths is indicated beneath them.}
\label{fig:decomp}
\end{figure}

Both equalities are equivalent due to (\ref{eq:imp}), so we only need to prove the first one. Let $v(q;z)~=~\sum_{n \geq 1}\sum_{d \in \mathcal{D}_n}\prod_{i=1}^{2n}\frac{q^{h_i}}{1-q^{2h_i+1-\delta_{h_i,i}}}z^{2n}$ be the weighted generating function for Dyck paths without their first step. We decompose those paths as follows: the path begins with an ascent up to height $i$, in which each vertex has weight $\frac{q^{h}}{1-q^{2h}}$ at height $h$. It is followed $i$ times by a down step and a shifted weighted Dyck path, counted by $u_j(q;z)$. This decompositon is illustrated in Figure (\ref{fig:decomp}). Hence we have
\[
\arraycolsep=1.4pt\def\arraystretch{2.2}
\begin{array}{rcl}
v(q;z)&=&\sum_{i \geq 1} \left(\prod_{j=1}^{i} \frac{zq^j}{1-q^{2j}} \right) \left(\prod_{j=0}^{i-1}zu_j(q;z)\right)\\
&=&\sum_{i \geq 1}\frac{q^{i(i+1)/2}}{(q^2;q^2)_i} \left(\prod_{j=0}^{i-1}u_j(q;z)\right)z^{2i}.\\
\end{array}
\]
With (\ref{eq:sdf}), we have
\[
\arraycolsep=1.4pt\def\arraystretch{2.2}
\begin{array}{rcl}
v(q,z)&=& \sum_{i \geq 1}\frac{q^{i(i+1)/2}}{(q^2;q^2)_i} \left(\prod_{j=0}^{i-1}\frac{q^j}{1-q^{2j+1}}\frac{_0\Phi_1(;q^{2j+3};q^2;-z^2q^{2j+3})}{_0\Phi_1(;q^{2j+1};q^2:-z^2q^{2j+1})}  \right) z^{2i}\\
&=& \sum_{i \geq 1}\frac{q^{i^2}}{(q^2;q^2)_i(q;q^2)_i}\frac{_0\Phi_1(;q^{2i+1};q^2;-z^2q^{2i+1})}{_0\Phi_1(;q;q^2;-z^2q)}z^{2i}\\
&=& \sec^*_q(z)\left(\sum_{i \geq 0}\frac{q^{i^2}}{(q^2;q^2)_i(q;q^2)_i}\ _0\Phi_1(;q^{2i+1};q^2;-z^2q^{2i+1})z^{2i} \right) -1.\\
\end{array}
\]
Moreover, we have
\[
\arraycolsep=1.4pt\def\arraystretch{2.2}
\begin{array}{cl}
&\hspace{-2.2cm}\sum_{i \geq 0}\frac{q^{i^2}}{(q^2;q^2)_i(q;q^2)_i}\ _0\Phi_1(;q^{2i+1};q^2;-z^2q^{2i+1})z^{2i}\\
=&\sum_{i,j \geq 0}\frac{q^{i^2}}{(q^2;q^2)_i(q;q^2)_i}\frac{(-z^2q^{2i+1})^jq^{2j(j+1)}}{(q^2;q^2)_j(q^{2i+1};q^2)_j}z^{2i}\\
=&\sum_{i,j \geq 0}\frac{(-1)^jz^{2i+2j}q^{i^2+2ij+2j^2-j}}{(q^2;q^2)_i(q^2;q^2)_j(q;q^2)_{i+j}}\\
=&\sum_{n \geq 0}\frac{z^{2n}q^{n^2}}{(q;q^2)_n}\sum_{k = 0}^{n}\frac{(-1)^kq^{k(k-1)}}{(q^2;q^2)_k(q^2;q^2)_{n-k}}\\
=&\sum_{n \geq 0}\frac{z^{2n}q^{n^2}}{(q;q)_{2n}}\sum_{k = 0}^{n}\binom{n}{k}_{q^2}(-1)^kq^{k(k-1)}.
\end{array}
\]
To conclude, we need to use the \emph{Cauchy binomial theorem}. For all $n \in \mathbb{N}$, $k \leq n $, let $\binom{n}{k}_q~=~\frac{(q;q)_n}{(q;q)_k(q;q)_{n-k}}$ be the $q$-binomial coefficients.
\begin{theorem}[Cauchy binomial Theorem]
\label{th:binom}
\[
(a;q)_n = \sum_{k=0}^n\binom{n}{k}_q(-a)^kq^{\binom{k}{2}}.
\]
\end{theorem}
We can then simplify the previous expression, since we have
\[
\sum_{k = 0}^{n}\binom{n}{k}_{q^2}(-1)^kq^{k(k-1)} = (1;q^2)_n = \left\lbrace
\begin{array}{ll}
1&\text{if }n=0,\\
0&\text{otherwise.}
\end{array}\right.
\]
It follows that
\[
v(q,z) = \sec^*_q(z) - 1.
\]
\end{proof}
\section{Alternating $k$-permutations}

\begin{figure}
\center
\ytableausetup{nosmalltableaux}
\ytableaushort{267,\none \none 458, \none \none \none \none 13}*{3,5,6}*[*(gray)]{0,2,4}
\caption{The standard Young tableau associated with the alternating $3$-permutation $(2,6,7,4,5,8,1,3)$.}
\label{fig:shapes}
\end{figure}

In this section we try to generalise our analysis to alternating $k$-permutations.
\begin{definition}
Let $\sigma \in \mathfrak{S}_n$. We say that $\sigma$ is an \emph{alternating $k$-permutation} if \[D(\sigma) = \{ k, 2k, \ldots, (\left\lceil n/k \right\rceil~-~1~)~k~\}.\] We call $\mathcal{A}_k(n)$ the set of alternating $k$-permutations of $\mathfrak{S}_n$.
\end{definition}
When $k = 2$, we recover ordinary alternating permutations, since $\mathcal{A}_2(n) = \textit{RevAlt}_n$. Similarly to alternating permutations, we can put in bijection elements of $\mathcal{A}_k(n)$ with standard Young tableaux of a certain skew shape, like in Figure \ref{fig:shapes}.
Let $\sigma \in \mathcal{A}_k(nk+r)$ with $0 < r \leq k$, we call $T(\sigma)$ the associated standard Young tableau of $\sigma$, so that 
\[
maj(\sigma^{-1}) = \tmaj(T(\sigma)).
\]
Just like for alternating permutations, we can define the following generating function
\[
f_{nk+r}(q) = \sum_{\sigma \in \mathcal{A}_k(nk+r)} q^{maj(\sigma^{-1})}.
\]
and use Morales, Pak and Panova's hook formula, which gives us
\[
\frac{f_{nk+r}(q)}{(q;q)_{nk+r}} = \sum_{d \in \mathcal{D}_{nk+r,k}}\prod_{i=0}^{2n}\frac{q^{h_i}}{1-q^{h_i+\lfloor\frac{h_i}{k-1}\rfloor+1}},
\]
where $\mathcal{D}_{nk+r,k}$ 
is the set of $k$-Dyck paths of length $nk+r$, which is a path in the upper-right quarter plane whose steps are in $\{(1,k-1),(1,-1)\}$, begins at $(0,0)$ and ends at $(nk+r,k-r)$.
Instead, we look at Kur\c sung\"oz and Yee's work in \cite{kurcsungoz2011alternating} studying $f_{nk+r}(q)$. They found that, for $k \geq 2$ and $0 < r \leq k$,
\begin{equation}
F_r := \sum_{n \geq 0} \frac{f_{nk+r}(q)t^{nk+r}}{(q;q)_{nk+r}} = \frac{\displaystyle \sum_{n \geq 0} \frac{(-1)^nt^{nk+r}}{(q;q)_{nk+r}}}{\displaystyle \sum_{n \geq 0} \frac{(-1)^nt^{nk}}{(q;q)_{nk}}}.
\label{eq:KY}
\end{equation}
We give another proof of this result using a set of differential equations satisfied by this generating function. 
Take $\sigma \in \mathcal{A}_n$ and remove its maximum element to obtain $\sigma'$. Then we have $maj(\sigma^{-1}) = maj(\sigma'^{-1})$. Hence we have
\[
\left\lbrace\begin{array}{l}
F'_r = F_{k-1}F_{r} + F_{r-1} \text{ for }r \geq 2\text{, and}\\
F'_1 = F_{k-1}F_1 + 1.
\end{array}\right.
\]
We can then verify that (\ref{eq:KY}) is a solution to this set of equations. If we take $k = 2$, we find back the derivatives of $F_1 = \tan$ and $F_2 = \sec-1$, that is
\[
\begin{array}{l}
\sec' = \tan.\sec\\
\tan' = \tan^2 + 1.
\end{array}
\]

\section{Acknowledgements}
We thank Matthieu Josuat-Verg\`es for his invaluable help and support. This research did not receive any specific grant from funding agencies in the public, commercial, or not-for-profit sectors.

\bibliographystyle{abbrv}
\bibliography{arxiv_sub.bib}

\begin{thebibliography}{1}

\bibitem{flajolet1980combinatorial}
P.~Flajolet.
\newblock Combinatorial aspects of continued fractions.
\newblock {\em Discrete Mathematics}, 32(2):125--161, 1980.

\bibitem{frame1954hook}
J.~S. Frame, G.~d.~B. Robinson, and R.~M. Thrall.
\newblock The hook graphs of the symmetric group.
\newblock {\em Canad. J. Math}, 6(316):C324, 1954.

\bibitem{kurcsungoz2011alternating}
K.~Kur{\c{s}}ung{\"o}z and A.~J. Yee.
\newblock Alternating permutations and the mth descents.
\newblock {\em Discrete Mathematics}, 311(22):2610--2622, 2011.

\bibitem{morales2016hook}
A.~H. Morales, I.~Pak, and G.~Panova.
\newblock Hook formulas for skew shapes ii. combinatorial proofs and
  enumerative applications.
\newblock {\em arXiv preprint arXiv:1610.04744}, 2016.

\bibitem{naruse2014schubert}
H.~Naruse.
\newblock Schubert calculus and hook formula.
\newblock {\em Slides at 73rd S{\'e}m. Lothar. Combin., Strobl, Austria}, 2014.

\bibitem{stanley2010survey}
R.~P. Stanley.
\newblock A survey of alternating permutations.
\newblock {\em Contemp. Math}, 531:165--196, 2010.

\end{thebibliography}

\end{document}